\newtheorem{theo+}              {Theorem}           [section]
\newtheorem{prop+}  [theo+]     {Proposition}
\newtheorem{coro+}  [theo+]     {Corollary}
\newtheorem{lemm+}  [theo+]     {Lemma}
\newtheorem{exam+}  [theo+]     {Example}
\newtheorem{rema+}  [theo+]     {Remark}
\newtheorem{defi+}  [theo+]     {Definition}
\def \r{\mbox{${\mathbb R}$}}
\newenvironment{theorem}{\begin{theo+}}{\end{theo+}}
\newenvironment{definition}{\begin{defi+}}{\end{defi+}}
\theoremstyle{plain} \theoremstyle{remark}
\newtheorem{remark}{Remark}
\newtheorem{example}{Example}
\newtheorem*{ack}{\bf Acknowledgments}
\def\E{/\kern-1.0em \equiv }
\title{Bi-eigenmaps and biharmonic submanifolds in a sphere}
\author{Ye-Lin Ou$^{*}$ }
\address{Department of Mathematics,\newline\indent
Texas A $\&$ M University-Commerce,\newline\indent Commerce, TX
75429, U S A.\newline\indent E-mail:yelin.ou@tamuc.edu}
\thanks{$^{*}$ This work was supported by a grant from the Simons Foundation ($\#427231$, Ye-Lin Ou).}
\begin{document}

\title[Bi-eigenmaps and biharmonic submanifolds]{Bi-eigenmaps and biharmonic submanifolds in a sphere}

\subjclass{58E20, 53C12} \keywords{Biharmonic maps, biharmonic
submanifolds, bi-Laplace operator, bi-eigenmaps, buckling eigenmaps}
\date{01/09/2022}
\maketitle

\maketitle
\section*{Abstract}
\begin{quote}
{\footnotesize In this note, we first give a classification of biharmonic submanifolds in a sphere defined by  bi-eigenmaps ($\Delta^2 \phi=\lambda \phi$) or  buckling eigenmaps ($\Delta^2 \phi=-\mu \Delta \phi$). We then classify  biharmonic bi-eigenmaps and buckling eigenmaps into spheres with constant energy density.  The results can be viewed as generalizations of Takahashi's characterization of minimal submanifolds in a sphere by eigenmaps. \\
}
\end{quote}

\section{Biharmonic maps and biharmonic subamnifolds}

Biharmonic maps are maps between Riemannian manifolds $\phi:(M, g)\to (N, h)$ which are critical points of the bienergy functional
\begin{align}
E_2(\phi)=\frac{1}{2}\int_M |\tau(\phi)|^2\,dv_g,
\end{align}
where $\tau(\phi)={\rm Tr}_g\nabla d\phi$ is the tension field of the map $\phi$, vanishing of which means the map is a harmonic map.
Biharmonic maps are generalizations of  biharmonic functions and harmonic maps. The latter class of maps include harmonic functions, geodesics, minimal submanifolds, and Riemannian submersions with minimal fibers as important special cases. 

Biharmonic submanifolds are the images of isometric immersions which are biharmonic maps. Biharmonic submanifolds are a generalization of minimal submanifolds as the latter are the images of harmonic isometric immersions.  There are many biharmonic submanifolds which are not minimal, and it is a custom to call them proper biharmonic submanifolds.

The study of biharmonic submanifolds has been attracting a growing interest in the past two decades with many interesting results and progress made in  the topics related to the following conjectures about the classifications of biharmonic submanifolds in a space form.

{\em 1. B. -Y. Chen's Conjecture \cite{Ch}}: There is no proper biharmonic submanifold in a Euclidean space.\\
\indent{\em 2. Balmus-Montaldo-Oniciuc Conjecture \cite{BMO}}: (i) Any biharmonic submanifold in a sphere has constant mean curvature. (ii) Any proper biharmonic hypersurface in $S^{n+1}$ is (a part of) $S^n(\frac{1}{\sqrt{2}})$ or $S^p(\frac{1}{\sqrt{2}})\times S^{n-p}(\frac{1}{\sqrt{2}})\; (p\ne n/2)$.\\

All of the above conjectures are still open. We refer the reader to a recent book \cite{OC} and the vast references therein for a more detailed account on biharmonic maps and biharmonic submanifolds,  including basic examples, properties of biharmonic maps, some recent progress on biharmonic submanifolds, biharmonic conformal maps,  biharmonic maps with symmetry, and Liouville type and unique continuation theorems for biharmonic maps.\\

\section{ Bi-eigenmaps and some classifications of biharmonic submanifolds in spheres}

Let $\Delta$ be the Laplace operator acting on functions on a Riemannian manifold $(M, g)$ with the convention that $\Delta =div (\nabla f)$. 
\begin{definition}
An eigenfunction of the Laplace operator with eigenvalue $\lambda$ is function $f\in C^{\infty}(M)$ that solves the PDE
\begin{align}\label{Egen}
\Delta f=-\lambda f.
\end{align}
A function $f$ is called an eigenfunction of the bi-Laplace operator (or biharmonic operator termed by some authors) with eigenvalue $\mu$ if it solves the PDE
\begin{align}\label{2-E}
\Delta^2 f=\mu f.
\end{align}
For convenience, we will call an eigenfunction of the bi-Laplace operator, i.e., a solution of (\ref{2-E}) a {\bf bi-eigenfunction}  with {\bf bi-eigenvalue} $\mu$.

A function $f$ is called a {\bf buckling eigenfunction} with eigenvalue $\rho$ if it solves the PDE
\begin{align}\label{2-ED}
\Delta^2 f=-\rho \Delta f.
\end{align}
\end{definition}
Eigenvalue problems (\ref{2-E}) and (\ref{2-ED}) with boundary conditions $f|_{\partial M}=\frac{\partial f}{\partial\nu}|_{\partial M}=0$ are called {\bf the clamped plate problem} and {\bf the buckling problem} respectively, see \cite{Xi} and the references therein for more detailed background and some recent development.

For some universal inequalities for bi-eigenvalues  on compact connected domains in a unit $m$-sphere with boundary conditions see \cite{WX1, WX2} and \cite{Xi}.\\

It is clear from the definitions that (i) any eigenfunction of the Laplace operator of eigenvalue $\lambda$ is always a bi-eigenfunction of  bi-eigenvalue $\mu=\lambda^2$, (ii) any eigenfunction of the Laplace operator of eigenvalue $\lambda$ is also a buckling eigenfunction of the buckling eigenvalue $\rho=\lambda$. It is also easily seen that there are bi-eigenfunctions which are not eigenfunctions, e.g., $f(x)=|x|^2$ is a bi-eigenfunction on $\r^{m+1}$ with bi-eigenvalue $\mu=0$ which is not an eigenfunction. For this reason, we will use the term {\em proper bi-eigenfunction} for an bi-eigenfunction which is not an eigenfunction.

It is well known (see e.g., \cite{Xi}) that all of the above three types of eigenvalues are countable so we can list them as

\begin{align}\notag
0= & \lambda_1<\lambda_2\le \lambda_3\le \cdots +\infty\hskip2cm  (\rm Laplacian\; eigenvalues)\\\notag
0= & \mu_1 < \mu_2\le \mu_3\le \cdots +\infty\hskip2cm (\rm bi-eigenvalues)\\\notag
0= & \rho_1 < \rho_2\le \rho_3\le \cdots +\infty\hskip2cm (\rm Buckling\; eigenvalues)\\\notag
\end{align}

The interesting links between eigenfunctions of Laplacian and  minimal submanifolds traces back to the well known work of
Eells-Sampson \cite{ES} which states that an isometric immersion $\phi: M^m\to \r^{n+1}$ is minimal if and only if $\Delta \phi=0$. This means all the component functions are eigenfunctions of zero eigenvalues.

The following theorem is also well known.
\begin{theorem}\label{T0}
 (Takahashi \cite{Ta}) An isometric immersion  $\phi: M^m\to \r^{n+1}$ with $\Delta \phi=-\lambda\, \phi$ for a constant $\lambda\ne 0$, then $\lambda>0$ and $\phi$ realizes as a minimal isometric immersion $\phi: M^m\to S^{n}(\sqrt{\frac{m}{\lambda}})$. Conversely, if an isometric immersion $\phi: M^m\to \r^{n+1}$ realizes and a minimal isometric immersion into a sphere $S^n(r)$ in $\r^{n+1}$, then  $\Delta \phi=-\lambda \phi$ for $\lambda=\frac{m}{r^2}$. \\
\end{theorem}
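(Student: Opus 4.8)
The plan is to build everything on the Beltrami-type formula that identifies the tension field of an isometric immersion with the (componentwise) Laplacian of its position vector, namely $\tau(\phi)=\Delta\phi=m\,\mathbf{H}$, where $\mathbf{H}$ is the mean curvature vector of $\phi(M)$ in $\r^{n+1}$; this is the quantitative form of the Eells--Sampson fact quoted above. Under the hypothesis $\Delta\phi=-\lambda\phi$ this reads
\begin{align}\notag
m\,\mathbf{H}=-\lambda\,\phi,
\end{align}
so the whole argument hinges on the geometric meaning of the mean curvature vector being a constant multiple of the position vector. Throughout I assume $M$ connected.

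First I would prove that $\phi(M)$ lies in a round sphere centered at the origin. Since $\mathbf{H}$ is always normal to $\phi(M)$ and $\lambda\neq0$, the relation above shows that $\phi=-\tfrac{m}{\lambda}\mathbf{H}$ is itself everywhere normal, i.e. its tangential component $\phi^{T}=\tfrac12\nabla|\phi|^2$ vanishes. Hence $|\phi|^2$ is constant, so $\phi(M)\subset S^{n}(r)$ with $r=|\phi|$. To pin down $r$ I would compute, using the isometric immersion condition $|d\phi|^2=m$,
\begin{align}\notag
\tfrac12\Delta|\phi|^2=\langle\Delta\phi,\phi\rangle+|d\phi|^2=-\lambda|\phi|^2+m.
\end{align}
As the left side is zero, $|\phi|^2=m/\lambda$; in particular $m/\lambda>0$ forces $\lambda>0$ and $r=\sqrt{m/\lambda}$.

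Next I would upgrade ``contained in a sphere'' to ``minimal in that sphere'' by comparing second fundamental forms for the tower $M^{m}\subset S^{n}(r)\subset\r^{n+1}$. The sphere $S^{n}(r)$ has second fundamental form $B_{S}(X,Y)=-\tfrac{1}{r^{2}}\langle X,Y\rangle\,\phi$ in $\r^{n+1}$, so tracing the Gauss-type splitting of the second fundamental form over $T_{p}M$ and dividing by $m$ yields
\begin{align}\notag
\mathbf{H}=\mathbf{H}^{S}-\tfrac{1}{r^{2}}\phi,
\end{align}
where $\mathbf{H}^{S}$ is the mean curvature vector of $M$ inside $S^{n}(r)$. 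Substituting $\mathbf{H}=-\tfrac{\lambda}{m}\phi=-\tfrac{1}{r^{2}}\phi$ gives $\mathbf{H}^{S}=0$, i.e. $\phi$ is a minimal immersion into $S^{n}(\sqrt{m/\lambda})$. The converse is the same computation read backwards: minimality means $\mathbf{H}^{S}=0$, whence $\mathbf{H}=-\tfrac{1}{r^{2}}\phi$ and $\Delta\phi=m\mathbf{H}=-\tfrac{m}{r^{2}}\phi$, so $\lambda=m/r^{2}$.

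The only genuinely delicate points are bookkeeping ones: fixing the sign convention in $\Delta=\operatorname{div}\nabla$ so that $\Delta\phi=m\mathbf{H}$ holds with the stated sign, and getting the trace in the second fundamental form splitting correct so that the sphere contributes exactly $-\tfrac{1}{r^{2}}\phi$. Once these conventions are fixed, both implications collapse to the single algebraic identity $\mathbf{H}=\mathbf{H}^{S}-\tfrac{1}{r^{2}}\phi$ together with the Beltrami formula, so I expect no substantive analytic obstacle.
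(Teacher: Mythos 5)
Your argument is correct and is the standard proof of Takahashi's theorem; the paper itself gives no proof, simply citing \cite{Ta}, so there is nothing to diverge from. Your key identity $\mathbf{H}=\mathbf{H}^{S}-\tfrac{1}{r^{2}}\phi$ combined with $\Delta\phi=m\mathbf{H}$ is exactly the $r$-dependent version of the formula $\Delta\phi=m\eta-m\phi$ that the paper invokes as Equation (\ref{H}), so your approach is consistent with the machinery used throughout the rest of the paper.
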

On the setting of biharmonic submanifolds, it was proved by B. Y. Chen \cite{Ch} that any spherical biharmonic submanifold in Euclidean space is minimal. More precisely, one can actually prove that there exists no biharmonic  isometric immersion $\phi:M^m\to \r^{n+1}$  ( an isometric immersion with $\Delta^2 \phi=0$) satisfying $\phi(M)\subset S^{n}$. For the case of hypersurfaces, it was proved in \cite{Vi} that  a biharmonic hypersurface $\phi: M^n\to S^{n+1}$ with $\Delta^2 \phi=f\phi$ for some function $f$ on $M$ is minimal.

It would be interesting to know whether there is an  isometric immersion $\phi: M^m\to \r^{n+1}$ with $\Delta^2 \phi=\lambda\, \phi$ or  $\Delta^2 \phi=-\mu \Delta\, \phi$ for some constants $\lambda, \mu$ that realizes as a biharmonic submanifold $\phi: M\to S^n$. Interestingly, we do have
\begin{example}
The composition $\phi=i\circ\varphi$ of a minimal  isometric immersion $\varphi: M^m\to S^{n-1}(\frac{1}{\sqrt{2}})$ followed by the standard biharmonic isometric immersion\\ $i: S^{n-1}(\frac{1}{\sqrt{2}})\to S^n\subset \r^{n+1}$ is a buckling eigenmap with $\Delta^2\phi=-2m\Delta \phi$ which realizes as a proper biharmonic submanifold.\\

In fact, we know (see \cite{CMO2}) that $\phi$ is a proper biharmonic submanifold. One can easily check that in this case, $\phi(x)= (\varphi(x), 1/\sqrt{2}\,)$. So, using Theorem \ref{T0}, we have
\begin{align}\notag
\Delta \phi & = (\Delta \varphi, 0)=(-2m\varphi, 0)\ne \lambda \phi,\; {\rm and}\\ \notag
\Delta^2 \phi & =(4m^2\varphi, 0)=-2m\Delta \phi,
\end{align}
which means exactly that the isometric immersion $\phi: M\to S^n\subset\r^{n+1}$  is a buckling eigenmap that realizes as a proper biharmonic submanifold in  the sphere $S^n$.
\end{example}

In this short note, we first give a complete classification of biharmonic submanifolds in a sphere defined by bi-eigenmaps or buckling eigenmaps, which can be viewed as a generalization of Takahashi's characterization of  minimal submanifolds in a sphere  by eigenmaps. We then generalize the results to have a classification of  biharmonic bi-eigenmaps and buckling eigenmaps into spheres with constant energy density. 

First, we prove the following theorem which generalizes the result on case of biharmonic hypersurfaces obtained in \cite{Vi}.
\begin{theorem}\label{T1}
If an isometric immersion $\phi: M^m\to \r^{n+1}$ with $\Delta^2 \phi=\lambda\, \phi$ for some function $\lambda$ on $M$ that realizes as a biharmonic submanifold $\phi: M\to S^n$, then  $\lambda=m^2$, and the biharmonic submanifold $\phi: M\to S^n$ is actually minimal. 
\end{theorem}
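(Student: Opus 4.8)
The plan is to convert both hypotheses into conditions on the mean curvature vector field $\mathbf{H}$ of $M$ in $S^n$ and then play off the orthogonality between $\mathbf{H}$ and the position vector $\phi$. I regard $\phi$ as an $\r^{n+1}$-valued map and let $D$ be the flat connection of $\r^{n+1}$. Since $\phi(M)\subset S^n$, at each point $\phi$ is the outward unit radial vector while $\mathbf{H}$ is tangent to $S^n$ and normal to $M$; in particular $\langle \mathbf{H},\phi\rangle=0$ pointwise, and this single fact will drive the conclusion.

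First I would record the sphere version of the Beltrami formula. Splitting the second fundamental form of $M$ in $\r^{n+1}$ into its $S^n$-part and the contribution of $S^n\subset\r^{n+1}$ gives $\Delta\phi=m\mathbf{H}-m\phi$, which reduces to Theorem \ref{T0} ($\Delta\phi=-m\phi$) precisely when $\mathbf{H}=0$. Applying $\Delta$ once more,
\begin{align}\notag
\Delta^2\phi=m\,\Delta\mathbf{H}-m\,\Delta\phi=m\,\Delta\mathbf{H}-m^2\mathbf{H}+m^2\phi ,
\end{align}
so the whole problem reduces to computing the componentwise rough Laplacian $\Delta\mathbf{H}$ and resolving it into its radial and tangential-to-$S^n$ parts.

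For the radial part I would differentiate $\langle\mathbf{H},\phi\rangle=0$ twice, using $D_{e_i}\mathbf{H}=-A_{\mathbf{H}}e_i+\nabla^\perp_{e_i}\mathbf{H}$ and $\tr\, A_{\mathbf{H}}=m|\mathbf{H}|^2$, to obtain $\langle\Delta\mathbf{H},\phi\rangle=m|\mathbf{H}|^2$; equivalently $\Delta^\phi\mathbf{H}=\Delta\mathbf{H}-m|\mathbf{H}|^2\phi$, where $\Delta^\phi$ is the rough Laplacian of the pulled-back bundle $\phi^{-1}TS^n$. Here the biharmonicity of $\phi:M\to S^n$ enters: with $\tau(\phi)=m\mathbf{H}$ and $S^n$ of constant curvature $1$, Jiang's formula gives $\tau_2(\phi)=m\,\Delta^\phi\mathbf{H}+m^2\mathbf{H}$, so $\phi$ being biharmonic is equivalent to $\Delta^\phi\mathbf{H}=-m\mathbf{H}$ (which one can check directly against the proper biharmonic example above). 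Feeding these two relations into the previous display collapses everything to the clean identity
\begin{align}\notag
\Delta^2\phi=-2m^2\,\mathbf{H}+m^2\big(1+|\mathbf{H}|^2\big)\,\phi .
\end{align}

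Finally I would impose the standing hypothesis $\Delta^2\phi=\lambda\phi$, which now reads $-2m^2\mathbf{H}=\big(\lambda-m^2(1+|\mathbf{H}|^2)\big)\phi$. Since the left-hand side is orthogonal to $\phi$ and the right-hand side is a multiple of $\phi$, taking the inner product with $\phi$ forces $\lambda=m^2(1+|\mathbf{H}|^2)$, and the remaining equation $-2m^2\mathbf{H}=0$ then gives $\mathbf{H}=0$; substituting back yields $\lambda=m^2$ and the minimality of $\phi:M\to S^n$. I expect the genuine work to be concentrated in the third paragraph: carrying out the Gauss--Weingarten computation of the rough Laplacian $\Delta\mathbf{H}$ and, above all, pinning down the signs in the curvature term of $\tau_2$ so that biharmonicity is correctly recorded as $\Delta^\phi\mathbf{H}=-m\mathbf{H}$. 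Once the displayed identity for $\Delta^2\phi$ is secured, the orthogonality of $\mathbf{H}$ and $\phi$ closes the argument at once.
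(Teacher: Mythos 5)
Your proof is correct and follows essentially the same route as the paper: your identity $\Delta^2\phi=-2m^2\mathbf{H}+m^2(1+|\mathbf{H}|^2)\phi$ is exactly what the paper gets by combining $\Delta\phi=m\eta-m\phi$, $|\Delta\phi|^2=m^2|\eta|^2+m^2$ and the Caddeo--Montaldo--Oniciuc characterization $\Delta^2\phi=-[2m\Delta\phi+(2m^2-|\Delta\phi|^2)\phi]$, and the final step (orthogonality of $\mathbf{H}$ and $\phi$ forcing $\mathbf{H}=0$ and $\lambda=m^2$) is identical. The only difference is that you rederive that characterization from Jiang's formula via the Gauss--Weingarten computation of $\Delta\mathbf{H}$, whereas the paper simply cites it.
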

\begin{proof}
Note that an isometric immersion $\phi: M^m\to \r^{n+1}$ realizes as a biharmonic submanifold $\phi: M\to S^n$ means that the isometric immersion $\phi: M^m\to \r^{n+1}$ is the composition $\phi: M\to S^n\to \r^{n+1}$. It is easily checked (see e.g., \cite{OC}, Page 358) that
\begin{align}\label{H}
\Delta \phi=m\eta-m \phi,
\end{align}
where $\eta$ is the mean curvature vector field of the submanifold $\phi: M\to S^n$.
Since $\eta$ is tangent to $S^n$ whilst $\phi$ is normal to $S^n$, we have 
\begin{align}\label{Mo}
|\Delta \phi|^2=m^2|\eta|^2+m^2.
\end{align}

On the other hand, It is known \cite{CMO2} (see also Equation (7.12b) in \cite{OC}) that an isometric immersion $M^m\to S^{n}$ is biharmonic if and only if
\begin{align}\label{102}
\Delta^2 \phi=-[2m\Delta \phi+(2m^2-|\Delta \phi|^2)\phi].
\end{align}
Substituting $\Delta^2\phi=\lambda \phi$, (\ref{H}) and (\ref{Mo}) into (\ref{102}) yields
\begin{align}\label{103}
2m^2\eta=-[\lambda-m^2(1+|\eta |^2)]\phi.
\end{align}
Using the fact that $\eta$ is tangent to $S^n$ whilst $\phi$ is normal to $S^n$ we conclude that $\eta=0$ and hence the submanifold is minimal and $\lambda=m^2$. Thus, we obtain the theorem. 
\end{proof}
\begin{remark}
Note that Theorem \ref{T1}  implies in particular that a biharmonic isometric immersion $\phi: M^m\to S^{n}$ defined by a bi-eigenmap $\Delta^2\phi=\lambda \phi$  is an eigenmap $\Delta \phi=-m\phi$.
\end{remark}

\begin{theorem}\label{T2}
If an isometric immersion $\phi: M^m\to \r^{n+1}$ with $\Delta^2 \phi=-\mu\, \Delta \phi$ for some function $\mu$ on $M$ that realizes as a biharmonic submanifold $\phi: M\to S^n$, then  $ \mu =2m$ and the biharmonic submanifold $\phi: M\to S^n$ has constant mean mean curvature $|\eta|=1$, and hence it is the composition of a minimal submanifold $\psi: M^m\to S^{n-1}(\frac{1}{\sqrt{2}})$ followed by the standard biharmonic isometric immersion $S^{n-1}(\frac{1}{\sqrt{2}})\to S^n$. 
\end{theorem}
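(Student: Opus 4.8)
The plan is to mirror the proof of Theorem \ref{T1}, exploiting the same three ingredients: the formula (\ref{H}), namely $\Delta\phi = m\eta - m\phi$, its consequence (\ref{Mo}), $|\Delta\phi|^2 = m^2|\eta|^2 + m^2$, and the biharmonicity characterization (\ref{102}). First I would substitute the buckling hypothesis $\Delta^2\phi = -\mu\,\Delta\phi$ into (\ref{102}) and collect the terms proportional to $\Delta\phi$ and to $\phi$, arriving at an identity of the form $(2m-\mu)\Delta\phi = (|\Delta\phi|^2 - 2m^2)\phi$. Inserting (\ref{H}) to eliminate $\Delta\phi$ in favour of $\eta$ and $\phi$ then converts this into
\[
(2m-\mu)m\,\eta = \bigl[(2m-\mu)m + |\Delta\phi|^2 - 2m^2\bigr]\phi .
\]

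The decisive step, again borrowed from the proof of Theorem \ref{T1}, is that $\eta$ is tangent to $S^n$ while $\phi$ is normal to it, so the two sides of this identity are orthogonal and must vanish separately. The tangential part $(2m-\mu)m\,\eta = 0$ forces a dichotomy: on the proper biharmonic locus ($\eta \neq 0$) it yields $\mu = 2m$, whereas the minimal branch $\eta = 0$ would instead give $\mu = m$ and corresponds to the eigenmap-induced (non-proper) buckling eigenmap, which I would record as the degenerate case and set aside. Feeding $\mu = 2m$ back into the normal (scalar) equation gives $|\Delta\phi|^2 = 2m^2$, and comparing with (\ref{Mo}) yields $m^2|\eta|^2 + m^2 = 2m^2$, hence the constant mean curvature $|\eta| = 1$. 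Up to this point everything is routine algebra paralleling the first theorem, so I expect no obstacle here.

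The remaining clause, that such a $\phi$ is the composition of a minimal immersion $\psi: M^m \to S^{n-1}(\frac{1}{\sqrt{2}})$ with the standard biharmonic inclusion $S^{n-1}(\frac{1}{\sqrt{2}})\hookrightarrow S^n$, is the step I expect to be the main obstacle, since it cannot be extracted from the algebra of (\ref{H}), (\ref{Mo}) and (\ref{102}) alone. Here I would invoke the known classification (\cite{CMO2}) of proper biharmonic submanifolds of $S^n$ that attain the maximal mean curvature $|\eta| = 1$, which asserts precisely that they lie minimally in the small hypersphere $S^{n-1}(\frac{1}{\sqrt{2}})$; this is exactly the converse of the construction carried out in the Example above. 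Thus the genuine geometric content, as opposed to the preliminary computation, resides in this reduction from ``constant $|\eta| = 1$'' to the explicit composition, and it is where I would lean on the cited structure theorem rather than a self-contained argument.
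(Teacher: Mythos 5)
Your proposal is correct and follows essentially the same route as the paper's proof: substitute the buckling hypothesis into (\ref{102}), use (\ref{H}) and (\ref{Mo}), split the resulting identity into its tangential and normal components along $S^n$, and then invoke the known structure theorem for proper biharmonic submanifolds with $|\eta|=1$ (the paper cites Oniciuc \cite{On}, i.e.\ Theorem 7.4 of \cite{OC}, rather than \cite{CMO2}, but it is the same reduction to a minimal submanifold of $S^{n-1}(\frac{1}{\sqrt{2}})$). Your explicit treatment of the minimal branch $\eta=0$, which satisfies the hypotheses with $\mu=m$, is in fact slightly more careful than the paper, which passes directly from the orthogonal decomposition to $\mu=2m$ and $|\eta|=1$ without noting that degenerate case.
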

\begin{proof}
Substituting $\Delta^2\phi=-\mu \Delta \phi$, (\ref{H}) and (\ref{Mo}) into (\ref{102}) yields
\begin{align}\label{108}
0=m(\mu-2m) \eta-[m(\mu-2m)+m^2(1-|\eta|^2)\phi
\end{align}
Since $\eta$ is tangent to $S^n$ whilst $\phi$ is normal to $S^n$, we have $\mu= 2m$ and $|\eta|=1$. Using Oniciuc's result \cite{On} (see also Theorem 7.4 in \cite{OC}) we conclude that the biharmonic submanifold $\phi: M\to S^n$ is the composition of a minimal submanifold $\psi: M^m\to S^{n-1}(\frac{1}{\sqrt{2}})$ followed by the standard biharmonic isometric immersion $S^{n-1}(\frac{1}{\sqrt{2}})\to S^n$.  This completes the proof of the theorem.
\end{proof}
\begin{remark}
Note that Theorem \ref{T2} impies in particular that there exists biharmonic isometric immersion $\phi: M^m\to S^{n}$ defined by buckling eigenmap $\Delta^2\phi=-\mu \Delta \phi$ which is not an eigenmap, i.e.,  $\Delta \phi\ne \lambda \phi$.
\end{remark}

\section{Biharmonic bi-eigenmaps and buckling eigenmaps into a sphere}

Note that an isometric immersion $\varphi: (M^m, g) \longrightarrow S^n$  that defines a submanifold has constant energy density $|d\varphi|^2=m$. In this section, we will generalize Theorems \ref{T1}, \ref{T2} to the cases of maps with constant energy density.

First, we recall that  for a map  $\varphi: (M^m, g) \longrightarrow S^n$ into a sphere and the standard isometric embedding $i: S^{n}\longrightarrow \mathbb{R}^{n+1}$, we have 
 \begin{theorem}\label{O1} {\rm (\cite{OO}, see also Lemma 10.3 in \cite{OC})}\\
 The  map $\varphi$ is  biharmonic if and only if  the map $\phi=i\circ\varphi: S^m\to \r^{n+1}$ solves the following PDE:
\begin{eqnarray}\notag
&&\Delta^2\phi +2|{\rm d} \phi|^2\Delta \phi+(\Delta |{\rm d}\phi|^2+2\,{\rm div}\,\theta-| \Delta \phi |^2+2|{\rm d}\phi|^4 )\phi \\\label{MF}
&&+2\,{\rm d}\phi({\rm grad}  |{\rm d} \phi|^2)=0,
\end{eqnarray}
where  $\theta=\langle {\rm d}\varphi, \tau(\varphi)\rangle$ a  $1$-form on $M$ defined by $\theta (X)=\langle {\rm d}\varphi, \tau(\varphi)\rangle(X)=\langle {\rm d}\varphi(X), \tau(\varphi)\rangle$.
\end{theorem}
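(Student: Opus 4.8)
The plan is to realize the left-hand side of (\ref{MF}) as the image under $d\iota$ of the bitension field $\tau_2(\varphi)$ of $\varphi\colon M^m\to S^n$, so that its vanishing is equivalent to the biharmonicity of $\varphi$. Throughout I keep the paper's convention $\Delta=\mathrm{div}\,\mathrm{grad}$, under which the tension field of a map into $\r^{n+1}$ is $\tau(\phi)=\Delta\phi$ and the bitension field of such a map is $\tau_2(\phi)=\Delta^2\phi$; accordingly I take the rough Laplacian $\Delta^\varphi$ on $\varphi^{-1}TS^n$ to be the trace of the second covariant derivative. The inclusion $\iota\colon S^n\to\r^{n+1}$ is totally umbilic with shape operator $-\mathrm{Id}$ and with unit normal at $\varphi(x)$ equal to the position vector $\phi(x)$, so its second fundamental form satisfies $\nabla d\iota(U,W)=-\langle U,W\rangle\,\phi$. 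All traces below are computed at a point in a geodesic orthonormal frame $\{e_i\}$.

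First I would record the tension-field composition formula. Pulling back the Gauss formula along $\varphi$ gives, for $V\in\Gamma(\varphi^{-1}TS^n)$, the identity $\bar\nabla^\phi_X(d\iota\,V)=d\iota(\nabla^\varphi_X V)-\langle d\varphi(X),V\rangle\,\phi$, and tracing with $V=d\varphi(e_i)$ yields
\[
\Delta\phi=d\iota(\tau(\varphi))-|d\phi|^2\,\phi .
\]
This is the tangential/normal decomposition of $\Delta\phi$ relative to $S^n$; in particular orthogonality gives the scalar identity $|\tau(\varphi)|^2=|\Delta\phi|^2-|d\phi|^4$, which is what eventually produces the coefficients $-|\Delta\phi|^2+2|d\phi|^4$ in (\ref{MF}).

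The substantive step is to apply $\Delta$ once more and sort the result into tangential and normal parts. The term $\Delta(|d\phi|^2\phi)$ is handled by the Leibniz rule for the Laplacian of a function times a vector-valued map, producing exactly the summands $2|d\phi|^2\Delta\phi$, $2\,d\phi(\mathrm{grad}\,|d\phi|^2)$ and $(\Delta|d\phi|^2)\phi$. For the harder term $\Delta(d\iota(\tau(\varphi)))$ I would apply the pulled-back Gauss formula twice to commute $\Delta$ past $d\iota$; this expresses it through the rough Laplacian $\Delta^\varphi\tau(\varphi)$ plus a scalar multiple of $\phi$ whose coefficient, after using metric compatibility to rewrite $\sum_i e_i\langle d\varphi(e_i),\tau(\varphi)\rangle$ in terms of $\tau(\varphi)$ and $\nabla^\varphi\tau(\varphi)$, is precisely where the divergence $\mathrm{div}\,\theta$ of the one-form $\theta=\langle d\varphi,\tau(\varphi)\rangle$ enters. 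I then convert the rough Laplacian into the bitension field via $\tau_2(\varphi)=\Delta^\varphi\tau(\varphi)+\mathrm{Tr}_g R^{S^n}(\tau(\varphi),d\varphi)d\varphi$, the constant curvature $1$ of $S^n$ giving $\mathrm{Tr}_g R^{S^n}(\tau(\varphi),d\varphi)d\varphi=|d\phi|^2\tau(\varphi)-d\varphi(\theta^\sharp)$.

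Assembling everything, the two tangential correction terms $\pm d\phi(\theta^\sharp)$, coming respectively from the curvature term and from the last $\nabla d\iota$ contribution, cancel, and the remaining scalar coefficients of $\phi$ collect into $\Delta|d\phi|^2+2\,\mathrm{div}\,\theta-|\Delta\phi|^2+2|d\phi|^4$. The net identity is
\[
\Delta^2\phi+2|d\phi|^2\Delta\phi+\big(\Delta|d\phi|^2+2\,\mathrm{div}\,\theta-|\Delta\phi|^2+2|d\phi|^4\big)\phi+2\,d\phi(\mathrm{grad}\,|d\phi|^2)=d\iota(\tau_2(\varphi)).
\]
Since $d\iota$ is a fibrewise linear isometric injection and $\tau_2(\varphi)$ is tangent to $S^n$, the $\r^{n+1}$-valued equation (\ref{MF}) holds if and only if $\tau_2(\varphi)=0$, that is, if and only if $\varphi$ is biharmonic, which is the assertion. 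I expect the main obstacle to be the bookkeeping in $\Delta(d\iota(\tau(\varphi)))$: correctly extracting the curvature contribution, recognizing the two mixed terms that assemble into $\mathrm{div}\,\theta$, and verifying the cancellation of the $d\phi(\theta^\sharp)$ terms, since a sign error anywhere there would spoil the clean identification with $d\iota(\tau_2(\varphi))$.
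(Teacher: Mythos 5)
The paper does not actually prove this statement---it is quoted from \cite{OO} and Lemma 10.3 of \cite{OC}---so there is no internal proof to compare against, but your derivation is correct and is the standard one behind the cited lemma. I checked the bookkeeping: writing $\Delta\phi={\rm d}i(\tau(\varphi))-|{\rm d}\phi|^2\phi$, applying $\Delta$ again via the pulled-back Gauss formula for the totally umbilic inclusion, and inserting $\tau_2(\varphi)=\Delta^{\varphi}\tau(\varphi)+|{\rm d}\phi|^2\tau(\varphi)-{\rm d}\varphi(\theta^{\sharp})$ does yield the identity
\begin{equation}\notag
\Delta^2\phi+2|{\rm d}\phi|^2\Delta\phi+\bigl(\Delta|{\rm d}\phi|^2+2\,{\rm div}\,\theta-|\Delta\phi|^2+2|{\rm d}\phi|^4\bigr)\phi+2\,{\rm d}\phi({\rm grad}\,|{\rm d}\phi|^2)={\rm d}i(\tau_2(\varphi)),
\end{equation}
with the two ${\rm d}\phi(\theta^{\sharp})$ terms cancelling exactly as you predict and $|\tau(\varphi)|^2=|\Delta\phi|^2-|{\rm d}\phi|^4$ producing the stated scalar coefficient; as a further consistency check, specializing to an isometric immersion (where $|{\rm d}\phi|^2=m$ and $\theta\equiv 0$) recovers the paper's equation (\ref{102}).
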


\begin{theorem}\label{T3}
A biharmonic map $\varphi: M^m\to S^n$ with constant energy density and  $\Delta^2 \phi=\lambda\, \phi$ for some function $\lambda$ on $M$ is harmonic, where $\phi=i\circ \varphi$ and  $i: S^{n}\longrightarrow \mathbb{R}^{n+1}$ is the standard isometric embedding.
\end{theorem}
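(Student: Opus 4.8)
The plan is to substitute the two hypotheses—constant energy density and the bi-eigenvalue equation $\Delta^2\phi=\lambda\phi$—into the biharmonicity criterion of Theorem \ref{O1}, and then read off the component tangent to $S^n$. First I would note that, since $i$ is an isometric embedding, $|d\phi|^2=|d\varphi|^2$ equals the prescribed constant, call it $c$; hence $\Delta|d\phi|^2=0$ and ${\rm grad}\,|d\phi|^2=0$, so the last term of (\ref{MF}) drops out and the equation collapses to
\begin{align}\notag
\Delta^2\phi+2c\,\Delta\phi+\big(2\,{\rm div}\,\theta-|\Delta\phi|^2+2c^2\big)\phi=0.
\end{align}
Feeding in $\Delta^2\phi=\lambda\phi$ then gives
\begin{align}\notag
2c\,\Delta\phi=-\big(\lambda+2\,{\rm div}\,\theta-|\Delta\phi|^2+2c^2\big)\phi,
\end{align}
which shows that, when $c\neq0$, $\Delta\phi$ is pointwise a scalar multiple of $\phi$. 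Note that the (possibly complicated) coefficient ${\rm div}\,\theta$ never needs to be evaluated, as it only multiplies $\phi$.

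The key structural input I would then invoke is the composition formula for $\phi=i\circ\varphi$, the analogue of (\ref{H}) valid for a general map into $S^n$, namely
\begin{align}\notag
\Delta\phi=\tau(\varphi)-|d\varphi|^2\phi=\tau(\varphi)-c\,\phi,
\end{align}
where $\tau(\varphi)$ is tangent to $S^n$ while $\phi$ is the outward unit normal. Thus the tangential part of $\Delta\phi$ is precisely $\tau(\varphi)$. Projecting the relation $2c\,\Delta\phi=(\cdots)\phi$ onto the tangent space of $S^n$ annihilates the right-hand side and leaves $2c\,\tau(\varphi)=0$, forcing $\tau(\varphi)=0$, i.e. $\varphi$ is harmonic. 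The degenerate case $c=0$ is immediate, since then $d\varphi=0$ and $\varphi$ is constant, hence trivially harmonic.

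The computation is essentially linear once Theorem \ref{O1} is in hand, so I do not expect a genuine analytic difficulty. The one point requiring care is the orthogonal splitting of $\Delta\phi$ into its $S^n$-tangential part $\tau(\varphi)$ and its normal part $-c\phi$: it is exactly this decomposition that converts the scalar identity $\Delta\phi\parallel\phi$ into the vanishing of the tension field. I therefore regard justifying this splitting—confirming that the tension field entering through $\theta$ and the one appearing in the composition formula are the same section of $\varphi^{-1}TS^n$—as the main (albeit mild) obstacle, after which the conclusion is forced.
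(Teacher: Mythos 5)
Your argument is correct and follows the same overall skeleton as the paper: substitute the hypotheses into the biharmonicity criterion of Theorem \ref{O1}, then split $\Delta\phi=\tau(\varphi)-c\,\phi$ into its $S^n$-tangential and normal parts and read off the tangential component. The one genuine difference is that you never evaluate the coefficient of $\phi$. The paper first proves an intermediate identity (``Claim I''), rewriting (\ref{MF}) for constant energy density as $\Delta^2\phi+2c\,\Delta\phi+(2c^2-\langle\Delta^2\phi,\phi\rangle)\phi=0$; this requires the external formula $\di\theta=|\Delta\phi|^2+\langle\nabla\Delta\phi,\nabla\phi\rangle$ together with differentiating the constraint $\langle\Delta\phi,\phi\rangle=-|d\phi|^2$. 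Only after that does it substitute $\Delta^2\phi=\lambda\phi$ to get $2c\,\Delta\phi+2c^2\phi=0$ and invoke (\ref{TF}). You observe, correctly, that since $\di\theta$ and the other scalar terms only multiply $\phi$, which is annihilated by the tangential projection, the whole of Claim I can be bypassed for this theorem: one lands on $2c\,\tau(\varphi)=0$ regardless of what that coefficient is. This is a legitimate economy (and it would work equally well for Theorem \ref{T4}, yielding $(2c-\mu)\tau(\varphi)=0$ directly); what the paper's fuller computation buys is the clean reusable equation (\ref{Me1}) and, as a by-product, the explicit value of the normal component. Your handling of the degenerate case $c=0$ and of the identification of $\tau(\varphi)$ as the tangential part of $\Delta\phi$ both match the paper.
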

\begin{proof}
To prove this and the next theorem, we need the following \\

{\bf Claim I:}  A map $\varphi: M^m\to S^n$ into sphere with constant energy density 
\begin{equation}
|d\varphi|^2=c
\end{equation}
is biharmonic if and only if
\begin{eqnarray}\label{Me1}
\Delta^2\phi +2c\, \Delta \phi+(2c^2-\langle  \Delta^2\phi, \phi\rangle )\phi =0.
\end{eqnarray}
{\bf Proof of  Claim I:} It is easily checked that $|d\phi|^2=|d\varphi|^2=c$. Using this, together with (see Equation (10.89))
\begin{eqnarray} 
div\, \theta= |\Delta \phi|^2+\langle \nabla \Delta \phi, \nabla\phi\rangle,
\end{eqnarray}
we can rewrite Equation (\ref{MF}) as
\begin{eqnarray}\label{107}
\Delta^2\phi +2 c\,\Delta \phi+(2c^2+ |\Delta \phi|^2+2\langle \nabla \Delta \phi, \nabla\phi\rangle )\phi =0.
\end{eqnarray}
On the other hand, since $\langle \phi, \phi\rangle =1$, we have
\begin{align}\notag
0=\Delta |\phi|^2=2\langle \Delta \phi, \phi\rangle+2|d\phi|^2, \;or
\end{align}
\begin{align}\label{106}
\langle \Delta \phi, \phi\rangle=-|d\phi|^2.  
\end{align}
Applying  $\Delta$ on both sides of (\ref{106}) yields
\begin{align}\notag
|\Delta \phi|^2+\langle \Delta^2 \phi, \phi\rangle+2\langle \nabla \Delta \phi, \nabla\phi\rangle=-\Delta |d\phi|^2. 
\end{align}
From this and the assumption that $|d\phi|^2=c$, we have
\begin{align}\notag
|\Delta \phi|^2+2\langle \nabla \Delta \phi, \nabla\phi\rangle=-\langle \Delta^2 \phi, \phi\rangle. 
\end{align}
Substituting this into (\ref{107}) we obtain the claim.

To complete the proof of the theorem, we use the assumption  $\Delta^2 \phi=\lambda\, \phi$ for some function $\lambda$ on $M$ and Equation (\ref{Me1}) to have
\begin{eqnarray}\notag
&& \lambda \phi +2c\, \Delta \phi+(2c^2-\lambda )\phi =0,\;{\rm or}\\\notag
&& 2c\, \Delta \phi+2c^2\phi=0.
\end{eqnarray}
Combining this and the well known tension field formula 
\begin{equation}\label{TF}
\Delta \phi=\tau (\varphi)-|d\phi|^2 \phi
\end{equation}  
we have $2c\tau(\varphi)\equiv 0$. From this we conclude that either $c=|d\varphi|^2=0$ which implies $\varphi$ is a constant map and hence harmonic, or $\tau(\varphi)\equiv 0$ which also means  $\varphi$ is  a harmonic map. Thus, we obtain the theorem.
\end{proof}

\begin{theorem}\label{T4}
A biharmonic map $\varphi: (M^m,g) \to S^n$ into a sphere with constant energy density $|d\varphi|^2=c$ and $\Delta^2 \phi=-\mu\, \Delta \phi$ for some function $\mu$ on $M$ is either harmonic or $ \mu =2c$. 
\end{theorem}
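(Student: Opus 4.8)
The plan is to mirror the argument used for Theorem~\ref{T3}, reusing \textbf{Claim I} which was already established in its proof. The starting point is the characterization (\ref{Me1}): since $\varphi$ has constant energy density $|d\varphi|^2=c$, biharmonicity is equivalent to $\Delta^2\phi + 2c\,\Delta\phi + (2c^2 - \langle\Delta^2\phi,\phi\rangle)\phi = 0$. The only new ingredient needed, relative to Theorem~\ref{T3}, is the value of $\langle\Delta^2\phi,\phi\rangle$ under the present buckling hypothesis.

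First I would evaluate that inner product. Substituting $\Delta^2\phi = -\mu\,\Delta\phi$ and using (\ref{106}), namely $\langle\Delta\phi,\phi\rangle = -|d\phi|^2 = -c$, gives $\langle\Delta^2\phi,\phi\rangle = -\mu\langle\Delta\phi,\phi\rangle = \mu c$. Next I would substitute both $\Delta^2\phi=-\mu\,\Delta\phi$ and $\langle\Delta^2\phi,\phi\rangle=\mu c$ into (\ref{Me1}), obtaining $(2c-\mu)\Delta\phi + (2c^2 - \mu c)\phi = 0$. The key algebraic observation is that the second coefficient factors as $c(2c-\mu)$, so the whole identity collapses to $(2c-\mu)(\Delta\phi + c\phi) = 0$.

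Then I would invoke the tension field formula (\ref{TF}), which for constant energy density reads $\Delta\phi = \tau(\varphi) - c\phi$ and hence $\Delta\phi + c\phi = \tau(\varphi)$; this turns the factored identity into the pointwise relation $(2c-\mu)\,\tau(\varphi) = 0$. From here I would read off the dichotomy: at each point of $M$, either $\tau(\varphi)=0$ or $\mu=2c$. If $\varphi$ fails to be harmonic, then $\tau(\varphi)\ne 0$ on some nonempty open set, where necessarily $\mu=2c$ (a constant, since $c$ is constant); otherwise $\tau(\varphi)\equiv 0$ and $\varphi$ is harmonic. This yields the stated conclusion.

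I do not anticipate a genuine obstacle, since the computation parallels the proof of Theorem~\ref{T3} step for step and the heavy lifting is already done by Claim~I. The only points demanding care are the correct evaluation $\langle\Delta^2\phi,\phi\rangle=\mu c$ via (\ref{106}) and spotting the factorization $2c^2-\mu c = c(2c-\mu)$ that makes (\ref{Me1}) factor cleanly; without that factorization one would be left with an unwieldy expression rather than the clean product $(2c-\mu)\tau(\varphi)=0$. A minor subtlety worth flagging in the write-up is that $\mu$ is only assumed to be a function, so the conclusion $\mu=2c$ should be understood as holding on the (open) locus where $\varphi$ is non-harmonic.
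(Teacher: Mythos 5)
Your proposal is correct and follows essentially the same route as the paper: substitute $\Delta^2\phi=-\mu\Delta\phi$ and (\ref{106}) into (\ref{Me1}) to obtain $(2c-\mu)\Delta\phi+c(2c-\mu)\phi=0$, then apply (\ref{TF}) to conclude $(2c-\mu)\tau(\varphi)=0$ and read off the dichotomy. Your closing remark about the pointwise nature of the conclusion when $\mu$ is merely a function is a reasonable clarification, but the underlying argument is identical to the paper's.
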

\begin{proof}
Substituting $\Delta^2\phi=-\mu \Delta \phi$ and (\ref{106}) with $|d\phi|^2=c$ into (\ref{Me1}) yields
\begin{align}\label{108}
(2c-\mu) \Delta \phi+c(2c-\mu )\phi =0.
\end{align}
It follows from this and (\ref{TF}) that $ (2c-\mu) \tau(\varphi)=0$. Thus, either  $\mu= 2c$, or $\tau(\varphi) = 0$, in which case $\varphi$ is harmonic. This completes the proof of the theorem.
\end{proof}

\begin{remark}
We would like to point out again when the map $\phi: (M^m,g) \to S^n$ is  an isometric immersion, then $|d\varphi|^2=m=c$, so Theorems \ref{T3}  and \ref{T4}  include Theorems \ref{T1} and \ref{T2} as special cases respectively. Also, Example 1 provides an example of a biharmonic buckling eigenmap into a sphere with constant energy density.
\end{remark}

\begin{ack}
I would like to thank M. Vieira for some valuable comments that lead to some improvements of an earlier version of the paper.
\end{ack}


\begin{thebibliography}{99}
\bibitem{BMO} A. Balmus, S. Montaldo and C. Oniciuc, {\em Classification results for biharmonic submanifolds in spheres}, Israel J. Math. 168 (2008), 201-220.
\bibitem{CMO2} R. Caddeo, S. Montaldo and C. Oniciuc, {\em Biharmonic submanifolds in spheres}, Israel J. Math. 130 (2002), 109--123.
\bibitem{Ch} B. Y. Chen, {\em Some open problems and conjectures on submanifolds of finite type}, Soochow J. Math. 17 (1991), no. 2, 169--188.
\bibitem{ES} J. Eells and J. H. Sampson, {\em Harmonic mappings of Riemannian manifolds},  Amer. J. Math. 86 1964 109--160.
\bibitem{Vi} M. Vieira, {\em Biharmonic hypersurfaces in hemispheres}, preprint, 2020, arXiv:2008.08274.
\bibitem{On} C. Oniciuc, {\it Tangency and harmonicity properties}, Ph.D. Thesis, Geometry Balkan Press, 2003.
\bibitem{OO} C. Oniciuc and Y. -L. Ou, {\em An unpublished note },  2018.
\bibitem{OC} Y. -L. Ou and B. -Y. Chen, {\em Biharmonic submanifolds and biharmonic maps in Riemannian geometry}, World Scientific, Hackensack, N. J., 2020. 
\bibitem{Ta} T. Takahashi, {\em Minimal immersions of Riemannian manifolds}, J. Math. Soc. Japan 1966, no. 18, 380-385. 
\bibitem{WX1} Q. Wang and C. -Y. Xia, {\em Universal bounds for eigenvalues of the biharmonic operator on Riemannian manifolds}, J. Funct. Analysis, 245 (2007) 334-352.
\bibitem{WX2} Q. Wang and C. Xia, {\em Sharp Lower Bounds for the First Eigenvalues of the Bi-Laplace Operator}, preprint 2021, arXiv:1802.05502v5. 
\bibitem{Xi} C. -Y. Xia, {\em Eigenvalues on Riemannian manifolds}, IMPA Mathematical Publications, 29th Brazilian Mathematics Colloquium, Rio de Janeiro, 2013.
\end{thebibliography}
\end{document}